\theoremstyle{plain}
\newtheorem{theorem}{Theorem}[section]
\newtheorem{lemma}[theorem]{Lemma}
\newtheorem{proposition}[theorem]{Proposition}
\theoremstyle{definition}
\newtheorem{remark}[theorem]{Remark}
\newtheorem{definition}[theorem]{Definition}
\newcommand{\R}{\mathbb R}
\newcommand{\C}{\mathcal C}
\newcommand{\Z}{\mathbb Z}
\newcommand{\N}{\mathbb N}
\renewcommand{\epsilon}{\varepsilon}
\title[A fixed point thm. for plane homeos with the top. shadowing property]{A fixed point theorem for plane homeomorphisms with the topological shadowing property.}
\date{\today}
\author[G.~Cousillas]{Gonzalo Cousillas}
\address{Instituto de Matemática y Estadística ``Rafael Laguardia''\\
Facultad de Ingeniería, Universidad de la Rep\'ublica\\
Montevideo, Uruguay.}
\email{gcousillas@fing.edu.uy}
\begin{document}

\begin{abstract}
In this paper we show that every homeomorphism of the plane with the topological shadowing property has a fixed point. Also, we show that a linear isomorphism of an Euclidean space has the topological shadowing property if and only if the origin is an attractor or a repeller.
\end{abstract}
\subjclass[2010]{37E30, 37B20}
\keywords{topological shadowing property,  fixed point theorem}
\maketitle

\section{Introduction}

The shadowing property  was introduced in the works of Anosov and Bowen and plays an important role in several branches of dynamical systems' theory. Roughly speaking, a system satisfies the shadowing property if every approximate orbit (pseudo-orbit) can be traced by a true one. Numerical simulations of dynamical systems always produce approximate orbits. Thus, systems with the shadowing property are those in which numerical simulations does not introduce unexpected behaviour, numerical orbits are followed by theoretical ones. 
For compact metric spaces the shadowing property does not depend on the choice of the metric. 
But as will see in Remark \ref{met_hyp},  even for a hyperbolic linear isomorphism the 
shadowing property may depend on the metric. 
In \cite{DLRW13} it was introduced a notion of shadowing property for topological spaces, the idea is to change the constants in the definition by neighborhoods of the diagonal in the product space.
Even if the space is non neccesarily compact, this definition has the remarkable property of being a conjugacy invariant. 

In this paper we study the topological shadowing property on euclidean spaces, mainly on the plane.
Our main result is the following:
\begin{theorem}\label{main:teo} Every orientation preserving planar homeomorphism with the topological shadowing property has a fixed point.\end{theorem}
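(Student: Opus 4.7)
My plan is to argue by contradiction and invoke Brouwer's Plane Translation Theorem together with the topological shadowing property. Assume $f$ is an orientation-preserving plane homeomorphism with topological shadowing but no fixed point. Brouwer's theorem then implies that $f$ has no periodic orbits whatsoever, and that every orbit $\{f^n(x)\}_{n\in\Z}$ is a closed discrete subset of $\R^2$ which leaves every compact set: for each compact $K\subset\R^2$ and each $x$, only finitely many iterates $f^n(x)$ lie in $K$. In particular the nonwandering set $\Omega(f)$ is empty.

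Since $f$ has no fixed point, the graph $\mathrm{Gr}(f)=\{(x,f(x)):x\in\R^2\}$ and the diagonal $\Delta\subset\R^2\times\R^2$ are disjoint closed sets, and normality of the product provides an open neighborhood $U$ of $\Delta$ with $U\cap\mathrm{Gr}(f)=\emptyset$. Applying the topological shadowing property to $U$, I obtain a neighborhood $V$ of $\Delta$ such that every $V$-pseudo-orbit is $U$-shadowed by a true orbit of $f$.

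The heart of the argument is to construct a \emph{periodic} $V$-pseudo-orbit $(x_n)_{n\in\Z}$ of some period $N$. Any $U$-shadow $z$ of such a pseudo-orbit would satisfy $(f^{jN}(z),x_0)\in U$ for all $j\in\Z$, so the subsequence $(f^{jN}(z))_j$ returns infinitely often to a fixed $U$-section of $x_0$, directly contradicting the escape of orbits from the first step. To produce the pseudo-orbit I would appeal to Brouwer's translation-arc lemma: every point $x$ admits a free topological arc from $x$ to $f(x)$, and a configuration of finitely many such arcs inside a suitably chosen bounded region provides all the internal transitions of the pseudo-orbit; the closing step is realized by placing the first and last points within a common Brouwer translation domain, so that the return jump can be made to lie within $V$.

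The main obstacle is precisely that closing jump. To force $(f(x_{N-1}),x_0)\in V$ one needs quantitative control of $V$ on a compact region, which is nontrivial because $V$ is only an abstract neighborhood of the diagonal, possibly very thin at infinity. I would handle this by passing to a symmetric auxiliary $V'\subset V$ with $V'\circ V'\subset V$, using local compactness of $\R^2$ to extract a uniform entourage inside $V'$ on a fixed compact piece, and then carrying out the arc-chaining construction entirely inside that piece. Making this quantitative reduction precise---so that the periodic $V$-pseudo-orbit so constructed is genuinely forced to have a shadow violating Brouwer's escape property---is where the topological (rather than metric) nature of the shadowing and the specific two-dimensional geometry have to come together, and it is the step I expect to be the most delicate part of the proof.
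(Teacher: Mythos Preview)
Your argument has a genuine gap at the construction of a periodic $V$-pseudo-orbit. For a Brouwer homeomorphism this step can simply fail: take the translation $T(x)=x+e_1$ and $V=\{(p,q):\|p-q\|<\tfrac12\}$; every $V$-pseudo-orbit has first coordinate strictly increasing by more than $\tfrac12$ at each step, so no periodic one exists. Since inside any translation domain $f$ is conjugate to $T$, the same monotonicity blocks the ``closing step within a common Brouwer translation domain'' that you propose. More fundamentally, you have no control over which $V$ the shadowing hypothesis hands you, and your arc-chaining sketch never invokes TSP to force $V$ to be large; nothing prevents $V$ from lying inside an entourage of the above type. In fact, for fixed-point-free orientation-preserving plane homeomorphisms the non-existence of periodic chains with small jumps is essentially the content of the Brouwer/Franks theory---producing one would already yield a fixed point, so the strategy is circular. (A minor separate issue: the condition $U\cap\mathrm{Gr}(f)=\emptyset$ plays no role and does not make the $U$-sections of a point bounded, which you need for ``infinitely many returns'' to contradict escape to infinity.)

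The paper proceeds in the opposite direction: it exhibits an explicit $\epsilon\in\mathcal C^{+}$ and, for every $\delta\in\mathcal C^{+}$, a $\delta$-pseudo-orbit that is not $\epsilon$-shadowed. If the saturated translation domain $W=\bigcup_n\overline{f^n(U)}$ of a Brouwer line $L$ is all of $\R^2$, then $f$ is globally conjugate to a translation and Lemma~\ref{traslacion} applies. Otherwise pick $x_0\in\partial W$; using a second Brouwer line through $x_0$ (and the conjugacy to a translation on its domain), $\epsilon$ is made so small along $\mathcal O(x_0)$ that $x_0$ is the only candidate shadow of $\{f^n(x_0)\}_{n\ge 0}$, while on $L$ one sets $\epsilon(x)=\tfrac12\,d(x,\mathcal O(x_0))$. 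The pseudo-orbit follows $f^n(x_0)$ for $n\ge 0$ and makes a single jump near $x_0$ onto some $f^{m_0}(L)$ for $n<0$; in backward time it meets $L$, where the $\epsilon$-tube excludes $\mathcal O(x_0)$. Both tails run off to infinity---exactly what Brouwer dynamics permits---so no periodicity is ever needed.
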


The idea of the proof relies on Brouwer's plane translation theorem, and the fact that  a translation  does not satisfy the topological shadowing property (see  Lemma \ref{traslacion}).
The main problem we have to deal with is that even if a restriction of a homeomorphism to an invariant subset does not satisfy the shadowing property, the homeomorphism itself may satisfy it: it is possible that the orbit 
shadowing a pseudo orbit belongs to the complement of the invariant set. Thus we adopt another approach and build around  Brouwer's lines and translation domains.

The paper is organized as follows, in Section \ref{preliminaries} we set the  definition of topological shadowing and some preliminary results.  In Section \ref{ej:plano} we exhibit explicit examples,   we obtain that an homothety (by an homothety we mean the map $x\mapsto kx$, $|k|\neq 1$) satisfies the topological shadowing property but the  map $f(x,y)=(2x,\tfrac{1}{2}y)$  does not. In Section \ref{sec:mainteo} we prove Theorem \ref{main:teo}.

\section{Preliminaries}\label{preliminaries}

Throughout this paper we  consider $X$  a metric space  and  $f:X\to X$ a homeomorphism.  The orbit of $x\in X$ is the set $\mathcal{O}(x)=\{f^n(x)\}_{n\in\Z}$ and $x\in X$ is a fixed point for $f$ if  $f(x)=x$. 
Let $\alpha$ and $\beta$ be  positive real numbers. A sequence  $\{x_n\}_{n\in\Z}\subset X$  is  a $\beta$-pseudo-orbit if it verifies $d(f(x_n), x_{n+1})<\beta$ for every $n\in\Z$. A sequence $\{x_n\}_{n\in\Z}$  is  $\alpha$-shadowed by  an orbit if there exists  $y\in X$ such that  $d(f^n(y), x_n)<\alpha$ for every $n\in \Z$.
A homeomorphism $f:X\to X$  satisfies the {\it metric shadowing property} if given $\alpha>0$, there exists $\beta>0$ such that every $\beta$-pseudo-orbit is $\alpha$-shadowed by an orbit. 

Denote by $\mathcal{C}^+$ the set $\{\epsilon:X\to \R^+; \epsilon \text{ is continuous}\}$.
Let $\epsilon$ and  $\delta$ be maps in $\mathcal{C}^+$, a sequence $\{x_n\}_{n\in\Z}\subset X$ is  a $\delta$-pseudo-orbit if it verifies $d(f(x_n), x_{n+1})< \delta(f(x_{n}))$ for every $n\in \Z$. 
A sequence $\{x_n\}_{n\in\Z}$ is $\epsilon$-shadowed  by an orbit if there exists $y\in X$ such that  $d(f^n(y), x_n)< \epsilon(x_n)$ for every $n\in \Z$.
\begin{definition}\label{TS}
A homeomorphism $f:X\to X$ satisfies the {\it topological shadowing property} if for every map $\epsilon$ in $\mathcal{C}^+$, there exists a map $\delta$ in $\mathcal{C}^+$ such that every $\delta$-pseudo-orbit is $\epsilon$-shadowed  by an orbit.
Denote  $$TSP:=\{f:X\to X,  f \text{ satisfies the topological shadowing property}\}.$$
\end{definition}

\begin{proposition}\label{equivalence}
If $X$ is a locally compact metric space, definition \ref{TS} is equivalent to the definition given in \cite{DLRW13} which is: 
\begin{itemize}
 \item Given  neighborhoods $E$ and  $D$ of $\Delta_X$, a $D$-pseudo-orbit for $f$ is a sequence $\{x_n\}_{n\in\Z}\subset X$ such that $(f(x_n), x_{n+1})\in D$ for all $n\in \Z$.
 A $D$-pseudo-orbit is $E$ shadowed by an orbit if there exists $y\in X$ such that $(f^n(y), x_n)\in E$ for all $n\in \Z$.
 A homeomorphism $f$ satisfies the topological shadowing property if for every neighborhood $E$ of $\Delta_X$ there exists a neighborhood $D$ of $\Delta_X$ such that every $D$-pseudo-orbit is $E$ shadowed by an orbit.
\end{itemize}
\end{proposition}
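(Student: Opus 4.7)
The plan is to set up a correspondence between continuous positive functions $\phi\colon X\to\R^+$ and open neighborhoods of the diagonal $\Delta_X\subset X\times X$, and then check that this correspondence transports the two shadowing formulations into each other. For any continuous $\phi>0$, the ``tubes'' $T_\phi^{(1)}:=\{(a,b)\in X\times X:d(a,b)<\phi(a)\}$ and $T_\phi^{(2)}:=\{(a,b):d(a,b)<\phi(b)\}$ are clearly open neighborhoods of $\Delta_X$, and by construction a sequence is a $\delta$-pseudo-orbit in the sense of Definition \ref{TS} exactly when $(f(x_n),x_{n+1})\in T_\delta^{(1)}$, and is $\epsilon$-shadowed exactly when $(f^n(y),x_n)\in T_\epsilon^{(2)}$. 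So the easy half of the correspondence is just ``read off the tube.''

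The converse direction, producing a continuous positive function out of a neighborhood $U$ of $\Delta_X$, is the real content. Equipping $X\times X$ with the max product metric $\rho((a,b),(c,d))=\max(d(a,c),d(b,d))$, I would define
\[
\phi_U(x):=\rho\bigl((x,x),(X\times X)\setminus U\bigr),
\]
with the convention $\phi_U\equiv 1$ if $U=X\times X$. As a distance to a closed set, $\phi_U$ is $1$-Lipschitz and hence continuous; it is strictly positive because $(x,x)$ lies in the open set $U$. Moreover, if $d(a,b)<\phi_U(a)$ then $\rho((a,a),(a,b))=d(a,b)<\phi_U(a)$, which prevents $(a,b)$ from lying in $(X\times X)\setminus U$, so $T_{\phi_U}^{(1)}\subset U$. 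A symmetric construction using the second coordinate yields a continuous $\psi_U>0$ with $T_{\psi_U}^{(2)}\subset U$.

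Once these two constructions are in place, the equivalence becomes pure bookkeeping. To derive the \cite{DLRW13} formulation from Definition \ref{TS}, given a neighborhood $E$ of $\Delta_X$ I would take $\epsilon:=\psi_E$, apply Definition \ref{TS} to obtain continuous $\delta>0$ so that every $\delta$-pseudo-orbit is $\epsilon$-shadowed, and set $D:=T_\delta^{(1)}$; every $D$-pseudo-orbit is then a $\delta$-pseudo-orbit, and the $\epsilon$-shadow sits in $T_\epsilon^{(2)}\subset E$. In the other direction, given continuous $\epsilon$ I would set $E:=T_\epsilon^{(2)}$, extract $D$ from the \cite{DLRW13} formulation, and take $\delta:=\phi_D$. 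The only step where anything could have gone wrong is the construction of $\phi_U$, and the distance-to-closed-set formula handles it cleanly in any metric space; local compactness is not actually used in the argument and I would only expect it to enter if one were reconciling with the most general topological version in \cite{DLRW13}.
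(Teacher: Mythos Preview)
Your proof is correct and follows the same overall strategy as the paper: set up a two-way correspondence between maps in $\mathcal{C}^+$ and neighborhoods of $\Delta_X$, then translate one shadowing formulation into the other. The only substantive difference is in the construction of a continuous positive function from a given neighborhood $U$ of the diagonal. The paper works slice by slice: it sets $h(x)=d(x,X\setminus E[x])$, observes that $h$ need not be continuous, and then replaces it by its lower Lipschitz envelope $\epsilon(x)=\inf_{y}\{h(y)+d(x,y)\}$, invoking an external reference for continuity. You instead take $\phi_U(x)=\rho\bigl((x,x),(X\times X)\setminus U\bigr)$ for the product max-metric, which is $1$-Lipschitz on the nose and yields $T_{\phi_U}^{(1)}\subset U$ immediately. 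Your route is a bit cleaner and self-contained; the paper's route has the minor advantage that it stays entirely in $X$ rather than $X\times X$. You are also more careful than the paper in distinguishing the first-coordinate tube used for pseudo-orbits from the second-coordinate tube used for shadowing; the paper handles only the first-coordinate version explicitly. Your closing remark that local compactness is not actually used is accurate for this argument.
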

The idea for proving this proposition appears in  \cite[Proposition 34]{DLRW13}.
\begin{proof} We claim that for every open neighborhood $E$ of $\Delta_X$ there exists a map $\epsilon\in \mathcal{C}^+$ such that $U_\epsilon=\{(x,y)\in X\times X: d(x,y)<\epsilon(x)\}\subset E$.
 
Let $E$ be a neighborhood of $\Delta_X$ such that $E[x]=\{y\in X: (x,y)\in E\}\subsetneq X$.
Define a function $h:X\to \R^+$ by $h(x)=d(x, X-E[x])$. 
Now define $\epsilon:X\to\R^+$ such that $\epsilon(x)=\inf\{h(y)+d(x,y):y\in X\}$.
By \cite[Theorem 67.2]{Zi}  $\epsilon$ is continuous.
Let us see that  $U_\epsilon[x]\subset E[x]$ for every $x\in X$.
Let $y\in U_\epsilon[x]$, then $d(x,y)<\epsilon(x)=\inf\{h(z)+d(x,z): z\in X\}$.
Choose $z=x$, then $d(x,y)<\epsilon(x)\leq h(x)=d(x,X-E[x])$.
This implies that $y\in E[x]$ and the claim follows.
Conversely, a map $\epsilon\in \mathcal{C}^+$ defines a neighborhood of $\Delta_X$ if we take the set $U_\epsilon=\{(x,y)\in X\times X: d(x,y)<\epsilon(x)\}$. 
 
 Taking this into account the equivalence is straightforward. \end{proof}

\begin{remark} In \cite{Lee} for shadowing a pseudo-orbit the authors consider $d(f^n(y), x_n)<\epsilon(f^n(y))$ i.e., the distance between $f^n(y)$ and $x_n$ depends on the value of $\epsilon$ in $f^n(y)$. We show it is equivalent if we consider  $\epsilon(x_n)$ instead of $\epsilon(f^n(y))$.  
\end{remark}
\begin{proof}
 Let $\epsilon$ be an arbitrary element of $\mathcal{C}^+$, take $U_\epsilon$ defined above.
 Note that for this $U_\epsilon$ there exists a symmetric neighborhood $\tilde{U}_\epsilon$ of $\Delta_X$ such that $\tilde{U}_\epsilon\subset U_\epsilon$. 
 By the previous proposition there exists a neighborhood $D$ of $\Delta_X$ such that every $D$-pseudo-orbit is $\tilde{U}_\epsilon$-shadowed.
 Applying again the previous proposition, there exists  $\delta\in\mathcal{C}^+$ such that $U_\delta\subset D$.
 Then every $\delta$-pseudo-orbit is $\tilde{U}_\epsilon$-shadowed, i.e.,  $(f^n(y), x_n)\in \tilde{U}_\epsilon$. 
 Since $\tilde{U}_\epsilon$ is symmetric, we have $( x_n, f^n(y))\in \tilde{U}_\epsilon$. 
 Thus $d(x_n, f^n(y))<\epsilon(f^n(y))$.
\end{proof}

 

Next proposition is a useful tool to create  examples of homeomorphisms satisfying the topological shadowing property.
\begin{proposition}\cite[Proposition 13]{DLRW13} \label{conjugaciones}
Let $X$ be a first countable, locally compact, paracompact and Hausdorff  space.
\begin{enumerate}
\item $f\in TSP$ if and only if $f^k\in TSP$ for  all nonzero $k\in \Z$.
 \item If $f\in TSP$ and $f$ is conjugate to $g$, then $g\in TSP$.
 \end{enumerate}
\end{proposition}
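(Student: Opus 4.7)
For part (2), I would pass to the neighborhood formulation guaranteed by Proposition \ref{equivalence}. If $g=h\circ f\circ h^{-1}$ with $h$ a homeomorphism of $X$, then $H:=h\times h$ is a self-homeomorphism of $X\times X$ preserving the diagonal $\Delta_X$, so it sends neighborhoods of $\Delta_X$ bijectively to neighborhoods of $\Delta_X$. Given a neighborhood $E$ of $\Delta_X$, feed $H^{-1}(E)$ into the TSP of $f$ to obtain a neighborhood $D'$; then $D:=H(D')$ is the neighborhood required for $g$. The intertwining $h\circ f^n=g^n\circ h$ turns $D$-pseudo-orbits for $g$ into $D'$-pseudo-orbits for $f$ and $f$-orbits $D'$-shadowing the latter into $g$-orbits $E$-shadowing the former.

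For the forward implication of part (1) it suffices to treat $k>0$, since $f\in TSP \Longleftrightarrow f^{-1}\in TSP$ is immediate by reversing indices in pseudo-orbits. Given $\epsilon\in\mathcal{C}^+$ for $f^k$, apply TSP of $f$ to the same $\epsilon$ to obtain $\delta$. For a $\delta$-pseudo-orbit $\{y_n\}$ of $f^k$, interpolate $x_{nk+j}:=f^j(y_n)$ for $0\le j<k$. The only non-trivial jump in the resulting bi-infinite sequence occurs at indices $m\equiv -1\pmod k$ and equals $d(f^k(y_{n-1}),y_n)<\delta(y_n)=\delta(x_{nk})$, so $\{x_m\}$ is a $\delta$-pseudo-orbit for $f$. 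Any $f$-orbit $\{f^m(y)\}$ that $\epsilon$-shadows $\{x_m\}$ restricts at multiples of $k$ to an $f^k$-orbit $\epsilon$-shadowing $\{y_n\}$.

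The reverse implication $f^k\in TSP \Longrightarrow f\in TSP$ is the substantive direction. Given $\epsilon\in\mathcal{C}^+$ for $f$, I would first construct $\epsilon'\in\mathcal{C}^+$ with the property that $d(w,z)<\epsilon'(z)$ forces $d(f^j(w),f^j(z))<\tfrac12\epsilon(f^j(z))$ for every $j=0,\ldots,k-1$. Such a continuous $\epsilon'$ exists because each $f^j$ is uniformly continuous on each relatively compact open set, and paracompactness allows one to patch these local moduli via a partition of unity subordinate to a locally finite cover by precompact opens. Apply TSP of $f^k$ to $\epsilon'$ to get $\delta'\in\mathcal{C}^+$. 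Then choose $\delta\in\mathcal{C}^+$ for $f$ small enough that along any $\delta$-pseudo-orbit $\{x_m\}$: (i) $d(f^k(x_{nk}),x_{(n+1)k})<\delta'(x_{(n+1)k})$ and (ii) $d(f^j(x_{nk}),x_{nk+j})<\tfrac12\epsilon(x_{nk+j})$ for $0\le j<k$. If $y$ is the $\epsilon'$-shadow of $\{x_{nk}\}$ for $f^k$, then combining the defining property of $\epsilon'$ with (ii) via the triangle inequality gives $d(f^{nk+j}(y),x_{nk+j})<\epsilon(x_{nk+j})$ for all indices $m=nk+j$.

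The main obstacle, which is what makes the hypotheses on $X$ necessary, is the partition-of-unity construction of the continuous functions $\epsilon'$ and $\delta$: in the absence of a global uniform modulus of continuity one cannot simply divide by constants as in the compact case, and the bulk of the work lies in turning the local uniform-continuity estimates for $f,f^2,\ldots,f^{k-1}$ into a globally continuous modulus. Once this is in place the triangle-inequality bookkeeping is routine.
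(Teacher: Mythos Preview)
The paper does not actually prove this proposition: it is quoted verbatim from \cite[Proposition~13]{DLRW13} and no argument is supplied, so there is nothing in the paper to compare your proposal against.

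That said, your outline is sound and follows the standard route. A small slip: in the forward direction of part~(1) you write the jump bound as $d(f^k(y_{n-1}),y_n)<\delta(y_n)$, but with the paper's convention the pseudo-orbit condition for $f^k$ reads $d(f^k(y_{n-1}),y_n)<\delta\bigl(f^k(y_{n-1})\bigr)$, and likewise the required bound for the interpolated $f$-pseudo-orbit is $\delta\bigl(f(x_{nk-1})\bigr)=\delta\bigl(f^k(y_{n-1})\bigr)$; these coincide, so the argument goes through unchanged once the bookkeeping is corrected. In the reverse direction your conditions (i) and (ii) on $\delta$ each hide a telescoping estimate over at most $k$ steps of $f$ (e.g.\ $d(f^k(x_{nk}),x_{(n+1)k})\le\sum_{j=1}^{k} d\bigl(f^{k-j+1}(x_{nk+j-1}),f^{k-j}(x_{nk+j})\bigr)$), so constructing $\delta$ requires the same partition-of-unity modulus-of-continuity argument you already invoked for $\epsilon'$; you acknowledge this in your final paragraph, so the outline is complete.
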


\section{Some examples on the plane}\label{ej:plano}
 We show in this section that an homothety belongs to $TSP$, however a linear transformation with two different eigenvalues, one  of module greater than $1$  and another lower that $1$ does not belong to $TSP$.

\begin{theorem}\label{hyperb}
 Let $g:\R^2\to \R^2$ be in the conjugacy class of  the map $f:\R^2\to\R^2$  such that $f(x,y)=(2x,\tfrac{1}{2}y)$. Then  $g\notin TSP$.
\end{theorem}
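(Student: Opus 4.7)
By Proposition~\ref{conjugaciones}(2), the topological shadowing property is invariant under conjugacy, so it suffices to prove $f\notin TSP$. The plan is to localize to the positive $x$-axis $L=(0,\infty)\times\{0\}$, which is $f$-invariant, and exploit the fact that $f|_L$, being $x\mapsto 2x$, is conjugate via $x\mapsto\log x$ to the translation $T(u)=u+\log 2$ on $\R$. By Lemma~\ref{traslacion} one has $T\notin TSP$, so Proposition~\ref{conjugaciones}(2) applied to this conjugacy gives $f|_L\notin TSP$. Hence there is $\epsilon_L\in\mathcal{C}^+(L)$ such that for every $\delta_L\in\mathcal{C}^+(L)$ some $\delta_L$-pseudo-orbit of $f|_L$ is not $\epsilon_L$-shadowed by any $f|_L$-orbit.

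The crucial additional observation is that, in $\R^2$, a pseudo-orbit lying on $L$ can only be shadowed by an orbit on $L$, provided the shadowing error is kept bounded. Indeed, for any $y=(y_1,y_2)\in\R^2$ one has $f^n(y)=(2^n y_1,2^{-n}y_2)$, so if $\epsilon\leq 1$ and $\{x_n\}\subset L$ is $\epsilon$-shadowed by $y$, then $|2^{-n}y_2|<1$ for every $n\in\Z$; letting $n\to-\infty$ forces $y_2=0$. By arranging the pseudo-orbit to stay bounded away from the origin (at least on one tail) one likewise rules out $y_1\leq 0$, so any candidate shadowing orbit lies in $L$.

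Now choose $\epsilon\in\mathcal{C}^+(\R^2)$ satisfying $\epsilon(x,y)\leq 1$ everywhere and $\epsilon(x,0)\leq\epsilon_L(x,0)$ for $x>0$; for instance, take any positive continuous extension $\tilde{\epsilon}_L\in\mathcal{C}^+(\R)$ of $\epsilon_L(\cdot,0)$ and set $\epsilon(x,y)=\min\{1,\tilde{\epsilon}_L(x)\}/(1+y^2)$. Given $\delta\in\mathcal{C}^+(\R^2)$, let $\delta_L=\delta|_L$; the restriction argument produces a $\delta_L$-pseudo-orbit $\{x_n\}\subset L$ which is not $\epsilon_L$-shadowed by any $f|_L$-orbit. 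This is automatically a $\delta$-pseudo-orbit of $f$. If some $f$-orbit $\epsilon$-shadowed $\{x_n\}$, then by the previous paragraph the orbit would lie in $L$, and since $\epsilon\leq\epsilon_L$ on $L$ it would also be an $\epsilon_L$-shadowing on $L$, contradicting the choice of $\{x_n\}$. Therefore $f\notin TSP$, and consequently $g\notin TSP$.

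The main obstacle is precisely the one flagged in the introduction: the failure of TSP on the invariant set $L$ does not, a priori, transfer to the full space, because pseudo-orbits on $L$ might conceivably be shadowed by orbits of $f$ leaving $L$. This is overcome by keeping $\epsilon$ bounded and using the backward expansion $2^{-n}$ in the $y$-direction to push any candidate shadowing orbit back onto $L$. The remaining work—producing $\epsilon$ on $\R^2$ compatible with the $\epsilon_L$ coming from the conjugacy with $T$—is a routine extension.
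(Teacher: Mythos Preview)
Your approach is genuinely different from the paper's direct construction, but it has a gap at precisely the step you flag as ``routine.'' You obtain $\epsilon_L$ abstractly from the failure of TSP for $f|_L$ (via conjugacy with a translation on $\R$), and then assert the existence of a positive continuous extension $\tilde\epsilon_L\in\mathcal{C}^+(\R)$. But $L=(0,\infty)\times\{0\}$ is not closed in $\R^2$: its closure contains the origin. If you trace the conjugacy $x\mapsto\log x$ back from the translation, the $\epsilon'$ used in Lemma~\ref{traslacion} decays at $\pm\infty$, and its pullback to $L$ forces $\epsilon_L(x)\to 0$ as $x\to 0^+$; then no positive continuous extension to $x\le 0$ exists, and the proposed $\epsilon(x,y)=\min\{1,\tilde\epsilon_L(x)\}/(1+y^2)$ cannot be an element of $\mathcal{C}^+(\R^2)$. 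In fact it seems that \emph{any} $\epsilon_L$ witnessing $f|_L\notin TSP$ must tend to zero at $0^+$: bad pseudo-orbits for $x\mapsto 2x$ on $(0,\infty)$ can only distinguish themselves on the backward tail $x_n\to 0$, and if $\epsilon_L$ stays bounded below near $0$ that tail is automatically $\epsilon_L$-close to the true orbit. The same boundary issue resurfaces when you rule out the origin as a shadowing point: you say the pseudo-orbit can be ``arranged to stay bounded away from the origin,'' but it was produced abstractly, and you have not shown this extra constraint is compatible with the failure of $\epsilon_L$-shadowing on $L$. (A minor further point: you invoke Lemma~\ref{traslacion} for $d=1$, but its proof chooses $y=(0,y_2,\dots,y_d)$, which is vacuous when $d=1$; the one-dimensional statement is true, but needs a separate one-line argument.)

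The paper sidesteps all of this by not confining the pseudo-orbit to an invariant line. It sets $\epsilon(x,y)=2^{-\|(x,y)\|}$ and takes the pseudo-orbit to follow the $x$-axis forward ($n\ge 0$) but to follow a genuine $f$-orbit of a nearby off-axis point $(x_0,y_0)$ backward ($n<0$), so that as $n\to -\infty$ it escapes to infinity in the $y$-direction. The forward tail, together with the superexponential decay of $\epsilon$, pins any shadowing point to $(x_0,0)$; the backward tail then contradicts shadowing because the orbit of $(x_0,0)$ tends to the origin while the pseudo-orbit does not. Your idea can be repaired by making the construction similarly explicit---choosing the bad pseudo-orbit by hand rather than abstractly---but at that point it essentially collapses to the paper's argument.
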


\begin{proof}
Since  topological shadowing property is conjugacy invariant, we will take the map $f(x,y)=(2x,\tfrac{1}{2}y)$.
Let $(x,y)\in\R^2$, consider $\|(x,y)\|=\max\{|x|,|y|\}$.
Let $\epsilon\in\mathcal{C}^+$ be such that $\epsilon(x,y)=2^{-\|(x,y)\|}$.
Take $(x_0,0)\in\R^2$ with $x_0>0$.
Given an arbitrary $\delta\in \mathcal{C}^+$ take $(x_0,y_0)$, $y_0>0$, such that $d((x_0,y_0),(x_0,0))<\delta(x_0,0)$.
Consider the following $\delta$-pseudo-orbit $\{(x_n, y_n)\}_{n\in\Z}$:
$$(x_n,y_n)=\begin{cases} f^n(x_0,0)& \mbox{ if } n\geq 0\\ f^n(x_0,y_0) &\mbox{ if }n< 0\end{cases}.$$
Suppose that the shadowing property holds.
Let  $(\bar{x},\bar{y})\in\R^2$ be a point such that $$d(f^n(\bar{x},\bar{y}), (x_n,y_n))<\epsilon(x_n,y_n),\;n\in\Z.$$
Since  $(x_n,y_n)=2^n(x_0,0)$ for $n>0$, we have \begin{eqnarray*} d(f^n(\bar{x},\bar{y}), (x_n,y_n))&=&d(f^n(\bar{x},\bar{y}), 2^n(x_0,0))\\&=&\max\{ 2^n|x_0-\bar{x}|, \tfrac{1}{2^n}|\bar{y}|\}.\end{eqnarray*}
If $\bar{x}\neq x_0$, then there exists $n_0$ such that  $d((x_n,y_n), f^n(\bar{x},\bar{y}))=2^n|\bar{x}-x_0|$ for $n>n_0$.
Since $\epsilon(x,y)=2^{-\|(x,y)\|}$, we have that $\epsilon(2^n(x_0,0))=\tfrac{1}{2^{2^n\|(x_0,0)\|}}$ and it follows that $2^n|\bar{x}-x_0|<\tfrac{1}{2^{2^n\|(x_0,0)\|}}$ for every $n>n_0$. 
Thus $\bar{x}=x_0$ and $d((x_n,0),f^n(\bar{x},\bar{y}))=\tfrac{1}{2^n}|\bar{y}|$.
So,   $$\tfrac{1}{2^n}|\bar{y}|=d(f^n(\bar{x},\bar{y}),(x_n,y_n)) <\epsilon(2^n(x_0,0))=\tfrac{1}{2^{2^n\|(x_0,0)\|}},\;\;\text{for } n>0.$$
Then $|\bar{y}|<\tfrac{2^n}{2^{{2^n}\|(x_0,0)\|}}\;$  for  $n>0$ and this implies $\bar{y}=0$.
We conclude that $(\bar{x},\bar{y})=(x_0,0)$ and the only orbit that is capable of shadowing this pseudo-orbit is the orbit of $(x_0,0)$.
Since the orbit of $(x_0,0)$  tends to $(0,0)$ when $n$ tends to $-\infty$ and  the pseudo-orbit tends to  $\infty$ when $n$ tends to $-\infty$,  we conclude that the orbit of $(x_0,0)$ cannot shadow $\{(x_n,y_n)\}_{n\in\Z}$.
Thus $f\notin TSP$.
\end{proof}

\begin{remark} In higher dimensions a similar result can be obtained.
Let $f:\R^d\to \R^d$ be a linear isomorphism such that there exists $\lambda, \mu\in Spec(f)$, $\lambda>1$, $0<\mu<1$, then $f\notin TSP$.
We just need to create a pseudo-orbit that comes to $0$ from the eigenspace asociated to $\mu$ and jumps to the eigenspace asociated to $\lambda$, then the $\epsilon$ map in $\C^+$ can be defined the same way as was done in the previous proposition.
\end{remark}

\begin{remark}\label{met_hyp}
With the usual metric a linear isomorphism satisfies the  metric shadowing property (see \cite{Om94}). However it is possible to choose another metric inducing the same topology such that  the metric shadowing property does not hold. Take for instance $h:\R_{\geq0}\to \R$ given by $h(r)=r+r^2$, then consider the points in polar coordinates and define $d'(p,q)=\|H(p)-H(q)\|$ where $H(r,\theta)=(h(r),\theta)$. This way if a linear isomorphism is in the conditions of the previous theorem, following the same ideas of the previous proof it can be shown that for $(\R^2, d')$ this map does not admit the metric shadowing property.
\end{remark}


\begin{definition} Let  $f:\R^d\to \R^d$ be a homeomorphism.
We say that $f$ satisfies the {\it forward topological shadowing property} if given $\epsilon\in \mathcal{C}^+$, there exists $\delta\in\mathcal{C}^+$ such that for every $\delta$-pseudo-orbit $\{x_n\}_{n\in\Z}$ there exists $y\in\R^d$ such that $d(f^{n}(y), x_n)\leq \epsilon(x_n)$, $n\geq 0$.
\end{definition}

The following lemma plays an important role for proving Theorem \ref{homothety}.  
\begin{lemma}\label{lemma_shadow}
Let  $f:\R^d\to\R^d$ be a homeomorphism with the forward topological shadowing property. Then $f\in TSP$. 
\end{lemma}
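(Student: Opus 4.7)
The plan is to reduce two-sided shadowing to forward shadowing via a diagonal/compactness argument on time-shifts of the pseudo-orbit.

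First, given $\epsilon \in \mathcal{C}^+$ we apply the forward topological shadowing property to the halved error function $\epsilon/2 \in \mathcal{C}^+$ (it is positive and continuous), obtaining some $\delta \in \mathcal{C}^+$ such that every $\delta$-pseudo-orbit admits a forward $\epsilon/2$-shadowing orbit. We claim this same $\delta$ witnesses the full (two-sided) topological shadowing property for $\epsilon$.

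Take a $\delta$-pseudo-orbit $\{x_n\}_{n \in \Z}$. For each integer $k \geq 0$, the time-shifted sequence $\{x_{n-k}\}_{n \in \Z}$ is again a $\delta$-pseudo-orbit, so by the forward property there is $y_k \in \R^d$ with $d(f^n(y_k), x_{n-k}) \leq \tfrac{1}{2}\epsilon(x_{n-k})$ for all $n \geq 0$. Set $z_k := f^k(y_k)$; rewriting the estimate in terms of $m = n-k$, we get
\begin{equation*}
 d(f^m(z_k), x_m) \leq \tfrac{1}{2}\epsilon(x_m) \quad \text{for every } m \geq -k.
\end{equation*}
In particular, taking $m = 0$ shows $d(z_k, x_0) \leq \tfrac{1}{2}\epsilon(x_0)$ for every $k \geq 0$, so the sequence $\{z_k\}_{k \geq 0}$ lies in a bounded (hence precompact) subset of $\R^d$.

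By local compactness of $\R^d$ we extract a subsequence $z_{k_j} \to z$. Fix any $m \in \Z$. For all sufficiently large $j$ we have $k_j \geq -m$, so $d(f^m(z_{k_j}), x_m) \leq \tfrac{1}{2}\epsilon(x_m)$; letting $j \to \infty$ and using continuity of $f^m$ yields $d(f^m(z), x_m) \leq \tfrac{1}{2}\epsilon(x_m) < \epsilon(x_m)$. Thus the full orbit of $z$ $\epsilon$-shadows $\{x_n\}_{n \in \Z}$, proving $f \in TSP$.

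The only delicate point is guaranteeing that the candidate shadowing points $z_k$ stay in a compact set so that a limit can be extracted; the halving trick (shadowing with $\epsilon/2$) is what makes this automatic, since $z_k$ is forced to lie within $\tfrac{1}{2}\epsilon(x_0)$ of the single point $x_0$. Everything else is bookkeeping with time-shifts.
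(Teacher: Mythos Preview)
Your proof is correct and follows essentially the same route as the paper's: shift the pseudo-orbit back by $k$, forward-shadow each shift, trap the resulting points $z_k=f^k(y_k)$ in a compact ball about $x_0$, extract a limit, and pass to the limit using continuity of $f^m$. The only cosmetic difference is your use of $\epsilon/2$, which cleanly recovers the strict inequality after the limit (the paper ends with $d(f^l(\tilde y),x_l)\le\epsilon(x_l)$); note, though, that halving is not what buys compactness---closed balls in $\R^d$ are already compact---its real role is the strict-vs-nonstrict bookkeeping.
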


\begin{proof} 
Given $\epsilon\in \mathcal{C}^+$, take $\delta\in \mathcal{C}^+$ of the definition of the forward topological shadowing property and $\{x_n\}_{n\in\Z}$ a $\delta$-pseudo-orbit.
There exists $y_0$ such that $d(x_n, f^n(y_0))\leq\epsilon(x_n)$ for  $n\geq 0$, in particular $y_0\in \overline{B(x_0, \epsilon(x_0))}$.
Let $\{z_n^{(-1)}\}_{n\in\Z}$ be the $\delta$-pseudo-orbit defined as   $z^{(-1)}_n=x_{n-1}$.
Then there exists $y_{-1}$ such that $d(z^{(-1)}_n, f^n(y_{-1}))\leq\epsilon(z^{(-1)}_n)$ for  $n\geq0$.
In particular $f(y_{-1})\in \overline{B(x_0,\epsilon(x_0))}$.
Let $k\in\N$, and let $\{z_n^{-k}\}_{n\in\Z}$ defined by $z^{(-k)}_n=x_{n-k}$.
Then  there exists $y_{-k}$ such that $$d(z^{(-k)}_n, f^n(y_{-k}))<\epsilon(z^{(-k)}_n)\;\;\text{for } n\geq 0,\;\text{and } f^k(y_{-k})\in \overline{B(x_0, \epsilon(x_0))}.$$
 Thus we have the sequence $\{f^k(y_{-k})\}_{k\in\N}\subset \overline{B(x_0,\epsilon(x_0))}$.
 Let $\tilde{y}$ be a limit point of $\{f^k(y_{-k})\}_{n\in\N}$.
 Let us show that the orbit of $\tilde{y}\;$ $\epsilon$-shadows $\{x_n\}_{n\in\Z}$.
 Let $l\in \Z$, and $k_i\in\N$ be such that $l+k_i>0$.
 Then 
\begin{eqnarray*}
 d(x_l,f^l(\tilde{y}))&=&d(x_l, f^l(\lim_{k_i\to\infty}f^{k_i}(y_{-k_i})))\\&=&\lim_{k_i\to\infty}d(x_l,f^{l+k_i}(y_{-k_i}))\\
&\leq&\epsilon(x_l)
\end{eqnarray*}
This completes the proof.
\end{proof}

Although our main interest is to work in the plane, the next result is valid in $\mathbb{R}^d$.

\begin{theorem}\label{homothety}
Let $g:\R^d\to\R^d$ be in the conjugacy class of an homothety.
Then $g\in TSP$.
\end{theorem}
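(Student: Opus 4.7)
By Proposition~\ref{conjugaciones}(2) it suffices to prove $f\in TSP$ for the homothety $f(x)=kx$ with $|k|\neq 1$. Since $f^{-1}(x)=k^{-1}x$ is again a homothety and $f\in TSP$ iff $f^{-1}\in TSP$ by Proposition~\ref{conjugaciones}(1), I may assume $|k|>1$, so that $0$ is a linear repeller. Lemma~\ref{lemma_shadow} then reduces the task to establishing the forward topological shadowing property for $f$.

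\textbf{Candidate shadowing point.} Fix $\epsilon\in\mathcal{C}^+$ and let $\delta\in\mathcal{C}^+$ (to be chosen below) produce a $\delta$-pseudo-orbit $\{x_n\}_{n\in\Z}$. Setting $e_n:=x_{n+1}-kx_n$, so that $\|e_n\|<\delta(kx_n)$, an induction gives $k^{-n}x_n=x_0+\sum_{j=0}^{n-1}k^{-j-1}e_j$ for $n\geq 1$; once $\sum|k|^{-j-1}\|e_j\|<\infty$ the natural candidate
\[y_0:=x_0+\sum_{j=0}^{\infty}k^{-j-1}e_j\]
is well defined in $\R^d$. A short computation then yields the key forward estimate
\[\|f^n(y_0)-x_n\|=\Bigl\|\sum_{i=0}^{\infty}k^{-i-1}e_{n+i}\Bigr\|\leq\sum_{i=0}^{\infty}|k|^{-i-1}\|e_{n+i}\|\qquad(n\geq 0).\]

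\textbf{Choice of $\delta$.} The decisive step is to pick $\delta\in\mathcal{C}^+$ so that the right-hand side is strictly less than $\epsilon(x_n)$ uniformly in $n\geq 0$ and in the pseudo-orbit. My proposal is to take $\delta(w):=\tfrac{|k|-1}{4|k|}\,\epsilon^{*}(w)$ with $\epsilon^{*}(w):=\inf_{j\geq 1}\epsilon(k^{-j}w)$: because $k^{-j}w\to 0$ and $\epsilon(0)>0$, on each compact set the functions $w\mapsto\epsilon(k^{-j}w)$ converge uniformly to the positive constant $\epsilon(0)$, so $\epsilon^{*}$ is locally a finite minimum of continuous functions and therefore lies in $\mathcal{C}^+$. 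The identity $k^{-i}x_{n+i}=x_n+\sum_{m=0}^{i-1}k^{-m-1}e_{n+m}$ then bounds $\|k^{-i}x_{n+i}-x_n\|$ uniformly in $i$, and combined with $\delta(kx_{n+i})\leq\tfrac{|k|-1}{4|k|}\epsilon(k^{-i}x_{n+i})$ this should collapse the tail sum to $\tfrac{1}{2|k|}\epsilon(x_n)<\epsilon(x_n)$.

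\textbf{Main obstacle.} The hard part will be converting the pointwise continuity of $\epsilon$ into a uniform-in-$i$ estimate of the form $\epsilon(k^{-i}x_{n+i})\leq 2\epsilon(x_n)$ valid for every pseudo-orbit. I expect to handle this by further damping $\delta$ by a continuous positive factor encoding a modulus of continuity of $\epsilon$, small enough that the $\delta$-controlled drift of $k^{-i}x_{n+i}$ always stays inside a neighborhood of $x_n$ on which $\epsilon$ oscillates by less than a factor of two. With that refinement in place the forward topological shadowing property follows from the explicit construction of $y_0$ above, and the theorem then follows from Lemma~\ref{lemma_shadow}.
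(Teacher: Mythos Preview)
Your skeleton matches the paper's exactly: reduce via Proposition~\ref{conjugaciones} to a fixed expanding homothety, write down the candidate $y_0=x_0+\sum_{j\geq 0}k^{-j-1}e_j$, bound $\|f^n(y_0)-x_n\|$ by a geometric tail in the errors, and finish with Lemma~\ref{lemma_shadow}. The divergence from the paper is entirely in how $\delta$ is manufactured, and the step you flag as the ``main obstacle'' is precisely where the real content lies.

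Your proposed $\delta=\tfrac{|k|-1}{4|k|}\,\epsilon^*$ together with ``further damping by a modulus of continuity of $\epsilon$'' can be made to work, but as written it hides a circularity: to get $\epsilon(k^{-i}x_{n+i})\leq 2\epsilon(x_n)$ you need the drift $\|k^{-i}x_{n+i}-x_n\|$ to be small, yet that drift is bounded by $\sum_{m<i}|k|^{-m-1}\delta(kx_{n+m})$, which through $\epsilon^*$ feeds back exactly the values $\epsilon(k^{-m}x_{n+m})$ you are trying to control. One clean way to break the loop is to first replace $\epsilon$ by a $1$-Lipschitz minorant $\hat\epsilon(z)=\inf_{w}\{\epsilon(w)+\|z-w\|\}\in\mathcal{C}^+$ (it suffices to $\hat\epsilon$-shadow), take $\delta=c\,\hat\epsilon^{*}$ with $c\leq(|k|-1)/2$, and prove $\hat\epsilon(k^{-i}x_{n+i})\leq 2\hat\epsilon(x_n)$ by induction on $i$: the inductive hypothesis gives $\|k^{-i}x_{n+i}-x_n\|\leq \tfrac{2c}{|k|-1}\hat\epsilon(x_n)\leq\hat\epsilon(x_n)$, and the Lipschitz bound closes the step. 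With this in place your tail estimate does collapse to $<\hat\epsilon(x_n)\leq\epsilon(x_n)$.

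The paper sidesteps this bootstrap with a different, more geometric choice of $\delta$ (working with $f(x)=2x$): it takes $\delta$ bounded, strictly \emph{decreasing in $\|x\|$}, with $\delta(x)<\min\{\epsilon(x),m\}$ everywhere (where $m=\min_{\overline{B(0,\epsilon(0))}}\epsilon$) and $\delta(x)<\|x\|/4$ outside $\overline{B(0,\epsilon(0))}$. The last condition forces any $\delta$-pseudo-orbit that ever leaves the ball to satisfy $\|x_{n+1}\|>\tfrac32\|x_n\|$ thereafter, so $\|x_n\|\uparrow\infty$; radial monotonicity of $\delta$ then gives $\delta(f(x_{i-1}))\leq\delta(x_l)$ for $i>l$, and the tail sum collapses directly to $\delta(x_l)\sum_{j\geq1}2^{-j}\leq\epsilon(x_l)$ with no induction. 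Pseudo-orbits that stay inside $\overline{B(0,\epsilon(0))}$ are forward $\epsilon$-shadowed by the fixed point $0$. This monotone-in-norm trick is the missing idea in your sketch; adopting it (or carrying out the Lipschitz-minorant bootstrap above) makes your argument complete.
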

\begin{proof}
Since  the topological shadowing property is conjugacy invariant, we take $f:\R^d\to\R^d$ such that $f(x)=2x$.
The main idea is to find $\delta\in \mathcal{C}^+$ in order to obtain just two kinds of  $\delta$-pseudo-orbits, those that remain close to the origin and those that tend (in norm) to $\infty$ as $n$ tends to $+\infty$.
We will forward shadow those pseudo-orbits and then apply Lemma \ref{lemma_shadow}.

Formally, let $\epsilon\in\mathcal{C}^+$ and $r_0=\epsilon(0)$ and  $m=\min\{\epsilon(x): x\in \overline{B(0,r_0)}\}$.
Take $\delta\in\mathcal{C}^+$  bounded such that \begin{equation}\label{cond:1} \delta(x)<\epsilon(x)\;\text{ for all } x\in\R^d, \text{ and }\end{equation}  
\begin{equation}\label{cond:2} \delta(x)<\begin{cases}\;\;\;m &\mbox{ if } x\in\overline{B(0,r_0)}\\
\;\;\tfrac{\|x\|}{4}&\mbox{ if }  x\notin\overline{B(0,r_0)} 
           \end{cases}\end{equation}

In this paragraph we are going to  show that for this $\delta$, there exist only two kinds of $\delta$-pseudo-orbits $\{x_n\}_{n\in\Z}$, the ones such that  $x_n\in \overline{B(0,r_0)}$ for all $n\in\Z$, and the ones such that $\|x_n\|\to +\infty$ as $n\to +\infty$. 
Let us show the existence of the first  kind of $\delta$-pseudo-orbits.
Since $0$ is fixed by $f$, $\delta$ is continuous and strictly positive in $\R^d$, we can take $x_0\in\overline{B(0,r_0)}$ close enough to $0$ such that $f(x_0)$ verifies $d(0,f(x_0))<\delta(f(x_0))$.
Then  $x_0\in B(f(x_0),\delta(f(x_0)))$.
We can choose $x_1$ to be $x_0$.
Repeating the previous argument for every $n\geq 1$ we have that $x_n\in B(0,r_0)$ for all $n\geq 0$.
For $n<0$, take $x_n=f^n(x_0)$.
Thus $x_n\in \overline{B(0, r_0)}$ for all $n\in \Z$. 
Now we show the existence of the other kind of $\delta$-pseudo-orbits.
Let $\{x_n\}_{n\in\Z}$ be a $\delta$-pseudo-orbit such that there exists $x_i \notin \overline{B(0,r_0)}$.
Suppose for simplicity $i=0$.
Since    $\delta(x)<\tfrac{\|x\|}{4}$ if $x\notin\overline{B(0,r_0)}$, we have that $$x_1\in B(f(x_0),\delta(f(x_0)))\subset B\left(f(x_0),\tfrac{\|f(x_0)\|}{4}\right).$$
As $B(f(x_0),\|f(x_0)\|/4)=B\left(2(x_0), \|x_0\|/2\right)$, then $\tfrac{3}{2}\|x_0\|<\|x_1\|$,  and $x_1\notin \overline{B(0,r_0)}$. 
Similarly for $x_n$, $n>0$, we prove that $x_n\notin \overline{B(0,r_0)}$ and we obtain that $\tfrac{3}{2}\|x_n\|<\|x_{n+1}\|$.
Therefore  $\left(\tfrac{3}{2}\right)^n\|x_0\|<\|x_n\|$ for all $n>0$, and  $\|x_n\|\to+\infty$ as $n\to +\infty$.

It is immediate that the fixed point $0$ $\epsilon$-shadows the first kind of pseudo-orbits. 
Let $\{x_n\}_{n\in\Z}$ be a $\delta$-pseudo-orbit that tends to infinity.
Then $x_1\in B(f(x_0),\delta(f(x_0)))$, so  $x_1=2x_0+r_1$, where  $\|r_1\|<\delta(f(x_0))$.
 For $n=2$ note that $f(x_1)=2^2x_0+2r_1$ and $x_2\in B(f(x_1),\delta(f(x_1)))$, so $x_2=2^2x_0+2r_1+r_2$, where $\|r_2\|<\delta(f(x_1))$.
 Similarly for an arbitrary $n>0$,  $$x_n=2^nx_0+2^{n-1}r_1+\ldots +2r_{n-1}+r_n$$ where  $\|r_i\|<\delta(f(x_{i-1}))$.
Now  consider the sequence $\left\{\tfrac{x_n}{2^n}\right\}_{n\in \N}$, then \begin{equation} 
                                                                        \tfrac{x_n}{2^n}=x_0+2^{-1}r_1+\ldots+2^{1-n}r_{n-1}+2^{-n}r_n \label{eq:po}
                                                                       \end{equation}
Since $\delta$ is bounded, we have that there exists $k>0$ such that $\|r_i\|<\delta(f(r_{i-1}))<k$.
This implies that the sequence in \eqref{eq:po} converges.
Then define $\bar{x}$ as  $$\bar{x}=x_0+\sum_{i=1}^\infty \tfrac{r_i}{2^i}.$$
 
Next we will show that the orbit of $\bar{x}$ forward $\epsilon$-shadows $\{x_n\}_{n\in\Z}$.  
Let $l>0$,
\begin{equation}\label{eq2}
  \begin{split} 
  d(f^l(\bar{x}),x_l)&=  \left\|2^lx_0+\sum_{i=1}^\infty \tfrac{r_i}{2^{i-l}}-2^lx_0-\sum_{i=1}^l\tfrac{r_i}{2^{i-l}}\right\| \\
	      &= \left\|\sum_{i=l+1}^\infty \tfrac{r_i}{2^{i-l}}\right\|  \\
	      &\leq \sum_{i=l+1}^\infty \tfrac{\|r_i\|}{2^{i-l}}  \\
	      &<\sum_{i=l+1}^\infty \tfrac{\delta(f(x_{i-1}))}{2^{i-l}}
  \end{split}		
\end{equation}
If $x_0\in\overline{B(0,r_0)}$, then there exists $i_0$ such that $x_i\in\overline{B(0,r_0)}$ for $0\leq i<i_0$ and $x_{i_0}\notin \overline{B(0,r_0)}$. 
For $0\leq l<i_0$, $x_l\in \overline{B(0,r_0)}$ and then $\epsilon(x_l)\geq m$.
Let us assume that  \begin{equation}\label{eq5}  \delta(x)<m \text{ for all }x\in\R^d, \end{equation}
Then by \eqref{eq2}
$$d(f^l(\bar{x}),x_l)<\sum_{j=l+1}^\infty \tfrac{\delta(f(x_{j-1}))}{2^{j-l}}\stackrel{(\ref{eq5})}{\leq} \sum_{j=l+1}^\infty \tfrac{m}{2^{j-l}}\leq m\leq \epsilon(x_l).$$ 
For $l\geq i_0$, we will suppose $\delta$ satisfies \begin{equation}\label{cond:4} \delta(x)>\delta(x') \text{ if } \|x\|<\|x'\|.\end{equation}
Thus  we have $\delta(f(x_{j-1}))<\delta(x_{j-1})$ and since $\|x_n\|_{n\in\Z}$ is an increasing sequence for $n>i_0$, we also have that $\delta(x_n)>\delta(x_m)$ for $n<m$.
Then again by \eqref{eq2}
\begin{eqnarray*}
d(f^l(\bar{x}),x_l)&<& \sum_{i=l+1}^\infty \tfrac{\delta(f(x_{i-1}))}{2^{i-l}}\\
                   &\leq&\sum_{i=l+1}^\infty \tfrac{\delta(x_{i-1})}{2^{i-l}}\\
                   &\leq& \delta(x_l)\left(\sum_{i=1}^\infty \tfrac{1}{2^i}\right)\\
	           &\leq& \epsilon(x_l)
\end{eqnarray*}
In the case that $x_0\notin \overline{B(0,r_0)}$,  $x_l\notin\overline{B(0,r_0)}$ for $l>0$ and with similar  arguments we can prove that  $d(f^l(\bar{x}), x_l)\leq \epsilon(x_l)$ for $l\geq 0$.
Then,  for that $\epsilon\in\mathcal{C}^+$ we required  $\delta$ to verify conditions (\ref{cond:1}), (\ref{cond:2}), (\ref{eq5}), (\ref{cond:4}).
Such function $\delta$ exists: condition (\ref{eq5}) says that $\delta$ is bounded.
Moreover, conditions (\ref{cond:1}) and (\ref{cond:2}) say that $\delta(x)<\min\left\{\epsilon(x),\tfrac{\|x\|}{4}\right\}$ and as these two functions are continuous and strictly positive,  $\delta$ may be strictly positive as well.
Finally condition (\ref{cond:4}) says that $\delta$ is decreasing in norm.  
Then for this  $\delta\in\mathcal{C}^+$,  every $\delta$-pseudo-orbit $\{x_n\}_{n\in\Z}$ is forward $\epsilon$-shadowed.

We finish the proof applying Lemma \ref{lemma_shadow}  to obtain that $f\in TSP$.       
\end{proof}

By a {\it reverse homothety} we mean the map $z\to \bar{z}/2$, $z\in \mathbb{C}$.  Applying the same ideas of the proof of the previous theorem, we can conclude that any homeomorphism conjugate to a reverse homothety satisfies the topological shadowing property.

In 1934 Kerekjarto's proved that a planar homeomorphism with an asymptotically stable fixed point is conjugate, on its basin of attraction, to one of the maps $z\to z/2$ or $z\to \overline{z}/2$, $z\in \mathbb{C}$, according to whether the homeomorphism preserves or reverses orientation (see \cite{GGM16}).
In case the basin of attraction is the whole plane we conclude, by the previous theorem and Theorem \ref{conjugaciones}, that a planar homeomorphism with an asymptotically stable fixed point satisfies the topological shadowing property.

\section{Main theorem: existence of a fixed point}\label{sec:mainteo}

In this section we  prove the existence of a fixed point for an orientation preserving planar homeomorphism with the topological shadowing property.

\begin{lemma}\label{traslacion}
 Let  $T:\R^d\to \R^d$ be such that $T(x)=x+e_1$, where $e_1=(1,0,\ldots, 0)$.  Then  $T\notin TSP$.
\end{lemma}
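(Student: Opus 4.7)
The plan is to fix one specific $\epsilon\in\mathcal{C}^+$ that decays to $0$ at infinity and, for each prescribed $\delta\in\mathcal{C}^+$, to construct a single-jump $\delta$-pseudo-orbit whose ``past'' and ``future'' look like two different orbits of $T$. Because any would-be shadowing orbit must match both tails and $\epsilon$ tightens to $0$ in each direction, the shadow would have to be simultaneously on both orbits, a contradiction.

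Concretely, I would take
\[
\epsilon(x)=\frac{1}{1+\|x\|},
\]
which is continuous, strictly positive, and tends to $0$ as $\|x\|\to\infty$. Given any $\delta\in\mathcal{C}^+$, pick a nonzero $v\in\R^d$ with $\|v\|<\delta(e_1)$, which is possible because $\delta(e_1)>0$ by definition of $\mathcal{C}^+$. Define the sequence
\[
x_n=\begin{cases} n e_1, & n\leq 0,\\ n e_1+v, & n\geq 1.\end{cases}
\]
For every $n\neq 0$ this sequence coincides with an actual orbit of $T$, hence $d(T(x_n),x_{n+1})=0$. At $n=0$, we have $T(x_0)=e_1$ and $x_1=e_1+v$, so $d(T(x_0),x_1)=\|v\|<\delta(e_1)=\delta(T(x_0))$. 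Thus $\{x_n\}_{n\in\Z}$ is a valid $\delta$-pseudo-orbit.

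Next I would rule out $\epsilon$-shadowing. Suppose some $y\in\R^d$ satisfied $\|T^n(y)-x_n\|<\epsilon(x_n)$ for every $n\in\Z$. For $n\leq 0$,
\[
\|y\|=\|y+n e_1 - n e_1\|=\|T^n(y)-x_n\|<\frac{1}{1+|n|},
\]
and letting $n\to-\infty$ forces $y=0$. For $n\geq 1$,
\[
\|y-v\|=\|y+n e_1-n e_1-v\|=\|T^n(y)-x_n\|<\frac{1}{1+\|n e_1+v\|},
\]
and letting $n\to+\infty$ forces $y=v$. Combining the two gives $v=0$, contradicting the choice of $v$.

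The construction is essentially routine; there is no serious obstacle once one notices that $\mathcal{C}^+$ lets us impose decay of $\epsilon$ at infinity while $T$ drags every orbit off to infinity. The only thing worth being careful about is making the jump $v$ small relative to $\delta$ at the jump location (hence the choice $\|v\|<\delta(e_1)$), and arranging $\epsilon$ so that both tails of the pseudo-orbit pin down a unique candidate shadow point in each direction. These two requirements force the inconsistency $y=0=v$, establishing $T\notin TSP$.
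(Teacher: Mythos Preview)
Your proof is correct and follows essentially the same strategy as the paper's: choose an $\epsilon\in\mathcal{C}^+$ decaying to $0$ at infinity, build a one-jump $\delta$-pseudo-orbit by splicing two genuine $T$-orbits, and deduce from both tails that a shadowing point would have to equal two distinct points. The only cosmetic differences are that the paper places the jump between $n=-1$ and $n=0$ (using a perturbation perpendicular to $e_1$) while you place it between $n=0$ and $n=1$ with an arbitrary nonzero $v$; your explicit $\epsilon(x)=1/(1+\|x\|)$ and your verification that $\|v\|<\delta(e_1)=\delta(T(x_0))$ matches the $\delta$-pseudo-orbit definition are in fact slightly cleaner than the paper's treatment.
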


\begin{proof}
Let  $\epsilon\in \mathcal{C}^+$ be such that $\epsilon(x)\to 0$ as $\|x\|\to \infty$.
Let  $\delta$ be an arbitrary element of $\mathcal{C}^+$.
Consider $0=(0,\ldots,0)$ and $y=(0,y_2,\ldots, y_d)$ such that   $d(0,y)<\delta(0)$.
Let $\{x_n\}_{n\in\Z}$ be the pseudo-orbit defined as  $$x_n=\begin{cases} T^n(0)& \mbox{if } n\geq 0\\ T^n(y) &\mbox{if } n<0\end{cases}.$$
Suppose that the orbit of some $\bar{x}$ $\epsilon$-shadows this $\delta$-pseudo-orbit.
Then for $l>0$, $$\epsilon(x_l)>d(T^l(\bar{x}), x_l)=d(\bar{x}+le_1,le_1)=\|\bar{x}\|.$$
Since $\epsilon(x_l)\to 0$ as  $l\to \infty$,  we conclude that $\bar{x}=0$.
But for $l<0$, if $\bar{x}$ $\epsilon$-shadows $\{x_l\}_{n\in\Z}$ then $\bar{x}=y$.
 We conclude that $\{x_n\}_{n\in\Z}$  is not $\epsilon$-shadowed, therefore $T\notin TSP$.
\end{proof}

If we suppose that $f:\R^2\to\R^2$ is a fixed point free orientation preserving  homeomorphism, then  Brouwer's plane translation theorem (see \cite{Br12}) asserts that for every $x\in\R^2$ there exists a properly embedded topological line $L$ (called {\it Brouwer line}), such that $x\in L$  and $L$  separates $f(L)$ from $f^{-1}(L)$.
Let $U$ be the open region whose boundary is $L\cup f(L)$, $U$  is called {\it translation domain for $f$} determined by $L$ (see Figure \ref{brou}).
\begin{figure}[htb] 
\centering 
\def\svgwidth{210pt} 
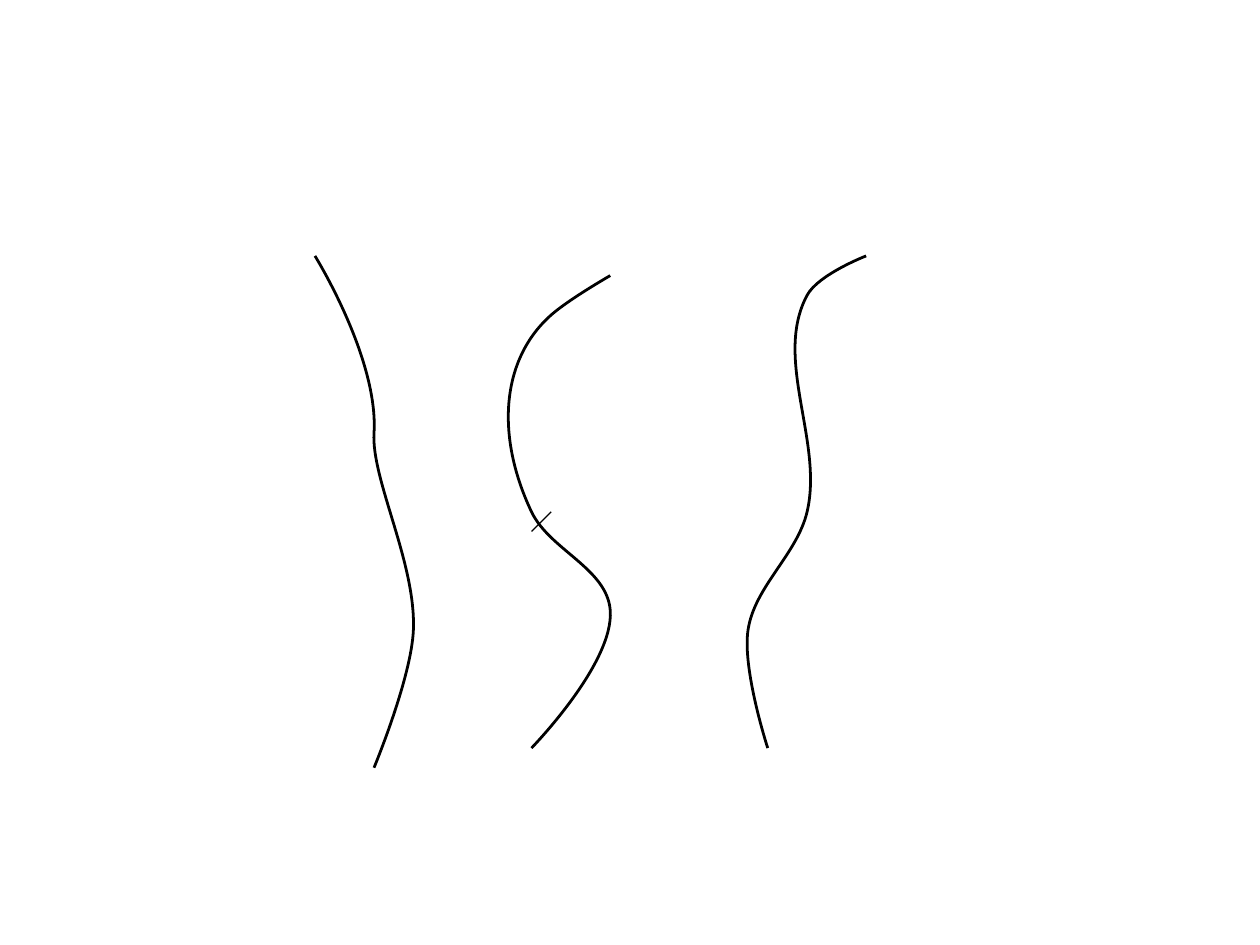
\caption{$L$ is a Brouwer line and $U$  the domain of translation determined by $L$.} 
\label{brou}
\end{figure}
Recall that an embedded topological line $L$ is the image of an injective continuous map $\varphi:\R\to \R^2$ such that $\varphi$ yields a homeomorphism between $\R$ and $\varphi(\R)=L$, where $L$ carries the subspace topology inherited from $\R^2$.
By  a properly embedding we mean that the function $\varphi$ is proper (preimages of compact sets   by $\varphi$ are compact).
If we consider $W=\bigcup_{n\in\Z} \overline{f^n(U)}$, then $W$ is a $f$-invariant open set and $f$ restricted to $W$ is conjugate to a translation on the plane \cite{Br12}.
Note that a  Brouwer line on $\R^2$ is  closed and that  the orbit of every point tends to infinity.
 
\begin{proof}[ Proof of  Theorem \ref{main:teo}:]
Let $f$ be a planar  homeomorphism with the topological shadowing property. 
Arguing by contradiction, suppose $f$ is  fixed point free. 
Let $\tilde{x}\in \R^2$, $L$  a Brouwer line such that  $\tilde{x}\in L$, let $U$ be the translation domain  determined by $L$ and $W=\bigcup_{n\in\Z} \overline{f^n(U)}$ the open set determined by $U$ such that $f|_W$ is conjugate to a translation.
In case $W=\R^2$, then $f$ is globally conjugate to a translation and by Lemma \ref{traslacion} $f\notin TSP$.

In case $W\neq\R^2$, then $\partial W\neq\emptyset$.
Note that the fact that $f|_W$ does not satisfy the shadowing property does not contradict the fact that $f$ does not satisfy the shadowing property.
Let $x_0\in\partial W$. Since $L$ is a Brouwer line, it is closed and then for every $x\in \mathcal{O}(x_0)$ there exists a neighborhood $U_{x}$ of $x$ such that \begin{equation}\label{cond:orbit} U_{x}\cap L=\emptyset.\end{equation}
On the other hand, since every point tends to infinity  if we consider the orbit of $x_0$, for every $x\in L$ we have that  \begin{equation}\label{cond:L} d(x,\mathcal{O}(x_0))>0.\end{equation}

We will find $\epsilon\in \mathcal{C}^+$ such that for an arbitrary $\delta\in \mathcal{C}^+$ there exists a $\delta$-pseudo-orbit that will not be $\epsilon$-shadowed. 
Let us first define $\epsilon$ in $\mathcal{O}(x_0)$. Consider $L'$ a Brouwer line for $x_0$ and $W'=\bigcup_{n\in\Z}\overline{f^n(U')}$, where $U'$ is the   translation domain determined by $L'$.
Then  $f|_{W'}:W'\to W'$ is conjugate to  a translation $T$ on $\R^2$. Let $h:W'\to \R^2$ be such conjugacy. 
Take the sequence $\{h(f^n(x_0))\}_{n\in\Z}=\{T^n(h(x_0))\}_{n\in\Z}\subset\R^2$.
Since $T$ is a translation on $\R^2$, the orbit of $h(x_0)$ tends to infinity, thus we take $\epsilon'\in \mathcal{C}^+$ such that $\epsilon'(T^n(h(x_0)))\to 0$ as $n\to+\infty$.
We  choose $\epsilon$ strictly positive on $\mathcal{O}(x_0)$ such that \begin{equation}\label{tr} B(f^n(x_0),\epsilon(f^n(x_0)))\subset \tilde{U}_{f^n(x_0)}\cap U_{f^n(x_0)}\end{equation} where $\tilde{U}_{f^n(x_0)}=h^{-1}\left(B(h(f^n(x_0)),\epsilon'(h(f^n(x_0))))\right)$ and $U_{f^n(x_0)}$  satisfying condition (\ref{cond:orbit}).
Now we define  $\epsilon$ on $L$.
Take $\epsilon:L\to\R$ such that $\epsilon(x)=\tfrac{d\left(x, \mathcal{O}(x_0)\right)}{2}$. 
Then $\epsilon$ is continuous and by condition (\ref{cond:L}) it is strictly positive.
We have defined $\epsilon$   continuous and strictly positive on  the closed set $\mathcal{O}(x)\cup L$.
Applying Tietze's extension theorem (see \cite{Ke}), we  can extend  $\epsilon$ continuously  to $\R^2$,  it is not difficult to see that it can be taken strictly positive as well.

Consider an arbitrary $\delta\in\mathcal{C}^+$. Since $x_0\in \partial W$,  we have that $B(x_0,\delta(x_0))\cap f^{m_0}(L)$ is nonempty for some $m_0$.
Suppose without lose of generality that $m_0>0$. Let $x_{m_0}\in B(x_0,\delta(x_0))\cap f^{m_0}(L)$ and consider the $\delta$-pseudo-orbit $\{x_n\}_{n\in\Z}$ defined as
$$x_n=\begin{cases} f^n(x_0) &\mbox{if } n\geq 0\\ f^n(x_{m_0})&\mbox{if } n<0\end{cases}.$$
For $n>0$, by condition (\ref{tr}) we have chosen $\epsilon$ on $\mathcal{O}(x_n)$ in order to obtain that the unique point whose orbit $\epsilon$-shadows $\{x_n\}_{n\in\Z}$ is that of $x_0$. 
On the other hand  $f^{-m_0}(x_{m_0})\in L$ and  $$B(f^{-m_0}(x_{m_0}),\epsilon(f^{-m_0}(x_{m_0})))\cap\mathcal{O}(x)=\emptyset.$$
We conclude that the orbit of $x_0$ can not shadow $\{x_n\}_{n\in\Z}$. Since $f\in TSP$, we have a contradiction and the proof of the theorem is complete.
\end{proof}



\section*{Acknowledgements}
The results of this paper are part of author's Magister's monograph under the guidance of Jorge Groisman. The author thanks the patience and hours of talks and discussions with him. 

\bibliographystyle{plain}

\bibliography{bibliografia}

\label{lastpage}
\end{document}